\documentclass[a4,12pt]{amsart}       % onecolumn (second format)

\usepackage[utf8]{inputenc}
\usepackage[T1]{fontenc}
\usepackage{yfonts}
\usepackage[english]{babel}
\usepackage{lipsum}
\usepackage{amsmath}
\usepackage{amsthm}
\usepackage{amssymb}
\usepackage{soul}

\usepackage[shortlabels]{enumitem}
\usepackage{graphicx}
\usepackage{mathtools}
\usepackage{hyperref}
\usepackage{amsfonts}
\usepackage{latexsym}
\usepackage{amscd}
\usepackage{cancel}
\usepackage{cleveref}
\usepackage[all]{xy}
\usepackage[dvipsnames]{xcolor}
\usepackage{stmaryrd}
\def\R#1{\textcolor{Black}{#1}}

\DeclareMathOperator{\Hom}{Hom}
\DeclareMathOperator{\Ext}{Ext}

\DeclareMathOperator{\Path}{Path}

\DeclareMathOperator{\ot}{\overset\to\otimes}

\def\Path{\text{Path}}

\def\x{\overline x}

\def\dualita#1#2{\mathrel{
                 \mathop{\vcenter{
                 \offinterlineskip
                 \hbox to 1.2truecm{$\mapsto$}%\kern ...
                 \hbox to 1.2truecm{$\mapsfrom$}}}%
                 }}
\newtheorem{theorem}{Theorem}[section]
\newtheorem{lemma}[theorem]{Lemma}
\newtheorem{proposition}[theorem]{Proposition}

\newtheorem{definition}[theorem]{Definition}
\newtheorem{remark}[theorem]{Remark}
\newtheorem{example}[theorem]{Example}
\begin{document}
\title[Simple modules over Leavitt path algebras]{Homological properties of simple modules over Leavitt path algebras}
\author{Francesca Mantese}
\address{Dipartimento di Informatica, Universit\`{a} degli Studi di Verona, I-37134 Verona, Italy}
\email{francesca.mantese@univr.it}
\author{Alberto Tonolo}
\address{Dipartimento di Matematica ``Tullio Levi-Civita'', Universit\`{a} degli Studi di Padova, I-35121, Padova, Italy}
\email{alberto.tonolo@unipd.it}

\thanks{
%2010 AMS Subject Classification:  16S99 (primary)     \\ 
%  $ {}^*$corresponding author  \ \ abrams@math.uccs.edu \ \  \\  
The two authors are supported by Project funded by the European Union – NextGenerationEU
under the National Recovery and Resilience Plan (NRRP), Mission 4 Component 2 Investment 1.1 -
Call PRIN 2022 No. 104 of February 2, 2022 of Italian Ministry of University and Research; Project
2022S97PMY (subject area: PE - Physical Sciences and Engineering) “Structures for Quivers, Algebras
and Representations (SQUARE)”. They are moreover members of INDAM - GNSAGA}
%    Information for second author

\maketitle

\begin{abstract}
Let $K$ be any field, and let $E$ be any graph.  We explicitly construct the projective resolution of simple left modules over the Leavitt path algebra  $L_K(E)$ associated to cycles and irreducible polynomials.  Then we study the dimension of the $K$-vector space of the extensions between two such simple modules.

\footnotesize
Keywords and phrases: Leavitt path algebra; simple modules; irreducible polynomials.
% \PACS{PACS code1 \and PACS code2 \and more}

MSC 2020 Subject Classifications:    Primary  16S88, Secondary 16S99
\normalsize

\end{abstract}

\begin{center}
\emph{to Gene: an amazing, funny mathematician and friend}
\end{center}

\section*{Introduction}

The study of the module theory over Leavitt path algebras remains an interesting area of research, with several aspects still requiring clarification. In 2012  Chen \cite{Ch15} introduced a method for constructing simple modules $V_{[p]}$ over a Leavitt path algebra $L_K(E)$ of an arbitrary graph $E$, where $[p]$ denotes the class of infinite paths tail-equivalent to a fixed infinite path $p$ in $E$. Two infinite paths in $E$ are tail equivalent if they are obtained by multiplying the same infinite path on the left  by possibly different finite paths.
In particular, given any cycle $e$ in $E$, one can construct the simple module $V_{[e^\infty]}$ associated with the infinite path obtained by traversing the cycle $e$ infinitely many times.

 Subsequently, Ara and Rangaswamy \cite{AR14}, introduced a new family of simple modules $V^p_{[e^\infty]}$, which generalize Chen's construction. These modules are associated with an exclusive cycle $e$ and an irreducible polynomial $p(x)\in K[x]$ satisfying $p(0)\not=0$.  More recently, giving  an irreducible polynomial $p(x)\in K[x]$ and a closed path $\pi \in E$ based on the vertex $v$,   Anh and Nam    in  \cite{AN20}   constructed   a further simple module, named Rangaswamy module: it is the   quotient of  the projective left ideal of $L_K(E)$ generated by the vertex $v$ by the submodule generated by the polynomial $p(x)$  evaluated in $\pi$.

\smallskip

In \cite{AMT15}, the authors of this note, in collaboration with Gene Abrams, provided explicit projective resolutions for the Chen simple modules \R{and studied their extension vector spaces. It turns out that the dimension of the $\Ext$-vector spaces between two Chen simple modules can be easily handled in terms of features of the graph $E$. These  results have been useful to deeply study the module category  of $L_K(E)$, as for instance in describing Pr\"ufer modules \cite{AMT19} or the injective envelopes of simple modules when $E$ is the Toeplitz graph (i.e $L_K(E)$ is isomorphic to  the Jacobson algebra) \cite{AMT21}, or when $E$ is a finite graph with disjoint cycles \cite{AMT24}}. 

\smallskip

In this paper  we aim to construct the projective resolutions \R{of a family of simple modules $V^p_{[e^\infty]}$, constructed as in \cite{AR14} but allowing $e$ to be any cycle in any graph $E$, and to study their Ext-groups.  The established constructions and results  will be used in a forthcoming paper dedicated to studying the injective envelopes of simple modules over Leavitt path algebras of arbitrary graphs \cite{AMT26}.  }

\section{A technical lemma}\label{sec:Extending scalars}
Let $K$ be a field and $q(x)=q_nx^n+q_{n-1}x^{n-1}+\cdots+q_1x+q_0$ an irreducible polynomial. Consider the field extension $K'=K[x]/\langle q(x)\rangle$ and denote by $\x$ the coset $\x+\langle q(x)\rangle\in K'$. Starting from any $K$-algebra $A$ (not necessarily with identity), one can construct the $K'$-algebra $A':=A\otimes_K K'$. Clearly $A\cong A\otimes_K K$ can be identified with a $K$-subalgebra of $A'$.

Every element $\mathfrak b\in A'$ can be written in a unique way in the form 
\[\mathfrak b=b_0\otimes 1+b_1\otimes \x+\cdots+b_{n-1}\otimes \x^{n-1}\]
for suitable $b_i\in A$, $i=0,..., n-1$. Such an element $\mathfrak b$ belongs to $A$ if and only if $b_1=b_2=\cdots=b_{n-1}=0$.

Let us fix two elements $e,v$ in $A$ satisfying
\[v^2=v,\quad ev=e=ve.\]
For each polynomial $f(x)=k'_\ell x^\ell+\cdots+k'_1x+k'_0\in K'[x]$ we define the evaluation of $f$ in $e$ setting
\[f(e)=k'_\ell e^\ell+\cdots+k'_1e+k'_0v\in A'.\]
The map sending any polynomial in $K'[x]$ to its valuation in $e$ is an homomorphism of $K'$-algebras, 
which restricts to an homomorphism of $K$-algebras $K[x]\to A$.

In this preliminary section we present a general result we will use in Section 4 on the relationship between the left $A$-ideal $Aq(e)$ and the contraction $A'(e-\x v)\cap A$ of the left $A'$-ideal $A'(e-\x v)$. The proof is due to Daniela La Mattina of the University of Palermo.

%
%If $q(x)=q_mx^m+\cdots+q_1x+q_0$ is any polynomial in $K[x]$ we define the evaluation $q(e)$ of $q(x)$ in $e$ setting
%\[q(e):= q_me^m+\cdots+q_1e+q_0v\in A.\]
%The map $\nu_K(e):K[x]\to A$, $\nu_K(e)(q(x))=q(e)$, is an homomorphism of $K$-algebras.

\begin{lemma}\label{thm:main}
It is $A'(e-\x v)\cap A=\R{Aq(e)}$.
\end{lemma}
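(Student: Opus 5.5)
We prove the two inclusions separately. Throughout, the evaluation $f\mapsto f(e)$ from $K'[x]$ to $A'$ is a homomorphism of $K'$-algebras, and $v$ acts as the identity on every $f(e)$, since $ve=e=ev$ and $v^2=v$.

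For the inclusion $Aq(e)\subseteq A'(e-\x v)\cap A$, note that $\x$ is a root of $q$ in $K'$. Hence in $K'[x]$ we can factor $q(x)=g(x)(x-\x)$ for some $g(x)\in K'[x]$. Evaluating in $e$ gives $q(e)=g(e)(e-\x v)$. Therefore for any $a\in A$ we get $aq(e)=(ag(e))(e-\x v)\in A'(e-\x v)$, and clearly $aq(e)\in A$.

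For the reverse inclusion, take $\mathfrak b\in A'$ with $\mathfrak b(e-\x v)=c\in A$. The strategy is to use the $K'$-linear structure and the minimal polynomial. Write $\mathfrak b=\sum_{i=0}^{n-1} b_i\otimes \x^i$ and expand
\[
\mathfrak b(e-\x v)=\sum_i b_ie\otimes\x^i-\sum_i b_iv\otimes \x^{i+1}.
\]
We then reduce $\x^n=-q_n^{-1}(q_{n-1}\x^{n-1}+\cdots+q_0)$ and compare coefficients of $1,\x,\dots,\x^{n-1}$. The coefficients of $\x^1,\dots,\x^{n-1}$ must vanish. This gives a linear recursion
\[
b_{i-1}v=b_ie+q_n^{-1}q_i\,b_{n-1}v,\qquad i=1,\dots,n-1,
\]
and the constant term gives $c=b_0e+q_n^{-1}q_0b_{n-1}v$.

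Solving the recursion downward from $b_{n-1}$ expresses each $b_iv$ through $b_{n-1}$ times a polynomial in $e$. Explicitly, set $a:=q_n^{-1}b_{n-1}v$. Then $b_{i}v=a\,(q_ne^{n-1-i}+q_{n-1}e^{n-2-i}+\cdots+q_{i+1}v)$. This is the Horner-type quotient of $q$ by $x$. One checks this by descending induction: the case $i=n-1$ holds trivially, and each step multiplies by $e$ and adds $q_i$.

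Substituting $i=0$ into the constant equation then gives
\[
c=b_0e+q_n^{-1}q_0b_{n-1}v=a(q_ne^n+\cdots+q_1e)+aq_0v=aq(e)\in Aq(e).
\]
This uses $b_0e=b_0ve$.

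The main obstacle is the bookkeeping in this step: the reduction of $\x^n$ and the signs in the recursion. A cleaner alternative I would try first avoids it. Right-multiply by $g(e)$ and use the symmetry of the factorization: from $\mathfrak b(e-\x v)=c$ we get $cg(e)=\mathfrak b\,q(e)$.

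We also use that $K[x]/(q)\cong K'$ via the basis $1,\x,\dots,\x^{n-1}$. Then $\mathfrak b q(e)$ has components $b_iq(e)$, which lie in $Aq(e)$. Meanwhile $cg(e)$ has $\x^{j}$-components equal to $c$ times polynomials in $e$, with the top component $q_nc\,e^{0}\cdot(\text{unit})$ after normalising $g$, which is monic of degree $n-1$ up to $q_n$. Comparing the $\x^{n-1}$ coefficients would yield $c\,v\in Aq(e)$ up to a scalar, hence $c=cv\in Aq(e)$, since $c=\mathfrak b(e-\x v)$ is right-fixed by $v$. I expect one of these two routes to close the argument, with the coefficient extraction being the only delicate point.
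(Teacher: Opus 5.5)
Your first route is correct and is essentially the paper's proof: you factor $q(x)=g(x)(x-\x)$ for one inclusion, and for the other you expand in the basis $1,\x,\dots,\x^{n-1}$, reduce $\x^n$, and telescope the recursion $b_{i-1}v=b_ie+q_n^{-1}q_ib_{n-1}v$ to get $c=q_n^{-1}b_{n-1}q(e)$. You also carry the leading coefficient $q_n$ explicitly, which the paper tacitly normalises to $1$. Your sketched alternative also works as it stands: the $\x^{n-1}$-component of $g(e)$ is exactly $q_nv$, so comparing $\x^{n-1}$-components in $cg(e)=\mathfrak b\,q(e)$ gives $q_nc=b_{n-1}q(e)$ directly.
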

\begin{proof}
Since $q(\x)=0$ and $x-\x$ is the minimal polynomial of $\x$ in $K'[x]$ we have $q(x)=r(x)(x-\x)$ for a suitable $r(x)\in K'[x]$. Therefore in $A'$ we have
$q(e)=r(e)(e-\x v)$ and hence $Aq(e)\subseteq A'(e-\x v)\cap A$.\\
Consider an element $\mathfrak a$ of the intersection $A'(e-\x v)\cap A$; then we have
\[\mathfrak a\otimes 1=\mathfrak a=\left(\sum_{i=0}^{n-1}b_i\otimes \x^i\right)(e-\x v)\]
with $b_0,...,b_{n-1}\in A$. Since $0=q(\x)=\x^n+q_{n-1}\x+\cdots+q_1\x+q_0$, we have 
\[-\x^n=q_{n-1}\x+\cdots+q_1\x+q_0.\]
Then
\begin{align*}
\mathfrak a=&\left(\sum_{i=0}^{n-1}b_i\otimes \x^i\right)(e\otimes 1- v\otimes\x)\\
=&\sum_{i=0}^{n-1}b_ie\otimes \x^i-\sum_{i=0}^{n-1}b_iv\otimes \x^{i+1}\\
=&\sum_{i=0}^{n-1}b_ie\otimes \x^i-\sum_{i=0}^{n-2}b_iv\otimes \x^{i+1}+b_{n-1}v\otimes (q_{n-1}\x^{n-1}+\cdots+q_1\x+q_0)\\
=&(b_0e+b_{n-1}vq_0)\otimes 1+(b_1e-b_0v+b_{n-1}vq_1)\otimes\x+\cdots+\\
&+(b_{n-1}e-b_{n-2}v+b_{n-1}vq_{n-1})\otimes\x^{n-1}.
\end{align*}
Since $\mathfrak a\in A$, we have
\begin{align*} 
\mathfrak a&=b_0e+b_{n-1}vq_0\\
0&=b_1e-b_0v+b_{n-1}vq_1\\
&\cdots\\
0&=b_{n-1}e-b_{n-2}v+b_{n-1}vq_{n-1}.
\end{align*}
Since $b_ie=(b_iv)e=(b_{i+1}e+b_{n-1}vq_{i+1})e$  for $0\leq i<n-1$, we get
\begin{align*} 
\mathfrak a&=b_0e+b_{n-1}vq_0\\
&=(b_1e+b_{n-1}vq_1)e+b_{n-1}vq_0=b_1e^2+b_{n-1}(q_1e+q_0v)\\
&\cdots\\
&=b_{n-1}e^{n}+b_{n-1}(q_{n-1}e^{n-1}+\cdots+q_1e+q_0v)\in Aq(e). \qedhere
\end{align*}
\end{proof}

\section{Leavitt path algebras}

A (directed) graph $E = (E_0, E_1, s, r)$ consists of a vertex set $E_0$, an edge set
$E_1$, and source and range functions $s, r : E_1\to E_0$. For $v \in E_0$, the set of edges
$\{e \in E_1 \mid s(e) = v\}$ is denoted $s^{-1}(v)$. 
%The graph $E$ is called finite in case both $E0 and E1
%are finite sets. 
A path $\alpha$ in $E$ is a sequence $e_1e_2\cdots e_n$ of edges in $E$ for which
$r(e_i) = s(e_{i+1})$ for all $1\leq  i\leq  n-1$. We say that such $\alpha$ has length $n$, and we write
$s(\alpha) = s(e_1)$ and $r(\alpha) = r(e_n)$. We view each vertex $v \in E_0$ as a path of length 0,
and set $v = s(v) = r(v)$. We denote the set of paths in $E$ by Path$(E)$. 
\R{A path $\lambda$ in Path$(E)$ is left (right) divisible by a path $\mu$ if $\lambda=\mu \lambda' $ ($\lambda=\lambda' \mu$) for a suitable $\lambda'\in \text{Path}(E)$.  We write $\mu | \lambda$ if $\mu$ divide on the left $\lambda$, and we do away with a specific notation for right divisibility.}

A path $\gamma= e_1e_2\cdots e_n$,  ($n \geq 1$) in $E$ is 
\begin{itemize}
\item \emph{closed} if
$r(e_n) = s(e_1)$.
%\item \emph{simple closed} if it is closed and $s(e_i)\not=s(e_1)$ for all $1<i\leq n$.
\item \emph{a cycle} if it is closed and $s(e_i)\not=s(e_j)$ for all $i\not=j$.
\item \emph{an exclusive cycle} if it is a cycle and it is disjoint with every other cycle; equivalently,
no vertex $u$ on $\gamma$ is the base of a different cycle other than the cyclic permutation of $\gamma$ based
at $u$. 
\end{itemize}

The path algebra $KE$ is the $K$-vector space with basis Path$(E)$ with the multiplication induced by the concatenation of paths.

We denote by $\widehat E$ the \emph{double graph} of $E$, gotten by adding to $E$ an edge $e^*$ in the reversed direction for each edge $e\in E_1$: $s(e^*)=r(e)$, and $r(e^*)=s(e)$.

The \emph{Leavitt path algebra} $L_K(E)$ is the path algebra $K\widehat E$ modulo the Cuntz-Krieger relations
\begin{enumerate}
\item[CK1] $e^*e'=\delta_{e,e'}r(e)$ for all $e,e'\in E^1$,
\item[CK2] $v=\displaystyle\sum_{e\in s^{-1}v}ee^*$ for every $v\in E^0$ for which $0<|s^{-1}v|<\infty$.
\end{enumerate}
 Every element of $L_K(E)$ may be written as $\sum_{i=1}^n k_i\alpha_i\beta_i^*$ where $k_i$ is a non-zero element of $K$, and $\alpha_i$, $\beta_i$ are paths in $E$.
 
 The set of all finite sums of vertices is a set of  \emph{local units} for $L_K(E)$. 
 An unital left $L_K(E)$-module is an abelian group $M$ with a (standard) module action of $L_K(E)$ on $M$, but with the added proviso that $L_K(E)M=M$. 
If $M$ is a unital left $L_K(E)$-module, then for each $m\in M$ there exists $\ell_m\in L_K(E)$ and $m'\in M$ such that $m=\ell_m m'$. It is easy to check that it is possible to assume $m=m'$ and $\ell_m$ equal to a finite sum of vertices. Indeed there exists a finite set $F_m$ of vertices such that $\sum_{v\in F_m}v\ell_m=\ell_m$ and hence
\[\left(\sum_{v\in F_m}v \right)m=\left(\sum_{v\in F_m}v \right)\ell_m m'=\left(\sum_{v\in F_m}v\ell_m\right) m'=\ell_m m'=m.\]
 
In the sequel with the term ``module'' we will always mean a unital module. 

\R{We recommend that readers wishing to deepen their understanding of
Leavitt algebras consult the seminal monograph \cite{AAM}.}

\section{A projective resolution of $V^{\sigma_{a,e}}_{[e^\infty]}$}

For each cycle $e$ in $E$ we can consider the infinite path $e^\infty$ obtained circling around $e$ infinitely many times. An infinite path $\mathfrak p$ in $E$ is tail equivalent to $e^\infty$ if $\mathfrak p=\lambda e^\infty$ for a suitable path $\lambda\in\Path(E)$. We denote by $[e^\infty]$ the set of all infinite paths tail equivalent to $e^\infty$. 
Consider the $K$-vector space $V_{[e^\infty]}$ having as $K$-basis the set $[e^\infty]$. For any $v\in E^0$, $f\in E^1$ and each $\mathfrak q=f_1\cdots f_n e^\infty$, $n\geq 2$, in $[e^\infty]$, define
\[v \mathfrak q=\begin{cases}
    \mathfrak q =f_1\cdots f_n e^\infty& \text{if } v=s(f_1), \\
     0 & \text{otherwise},
\end{cases}\]
\[f \mathfrak q=\begin{cases}
    f\mathfrak q=ff_1\cdots f_n e^\infty  & \text{if } r(f)=s(f_1), \\
     0 & \text{otherwise},
\end{cases}\quad\text{and}\]
\[f^* \mathfrak q=\begin{cases}
    f_2\cdots f_n e^\infty  & \text{if }f=f_1, \\
     0 & \text{otherwise}.
\end{cases}\]
The $K$-linear extension to all of $V_{[e^\infty]}$ of this action endow $V_{[e^\infty]}$ with a structure of a left $L_K(E)$-module. 

 In \cite[Theorem 3.3.]{Ch15}, Chen proved that $V_{[e^\infty]}$ is a simple left $L_K(E)$-module. He also introduced the twisted version of this type of modules, obtaining a new family of simples. 
 Let $a \in K$ with $a \neq 0$. We denote by $\sigma_{e,a}$ the gauge automorphism of $L_K(E)$ associated with $e$ and $a$. Then $\sigma_{e,a}$ maps each vertex $u \in E^0$, each edge $f \in E^1 \setminus {e_1}$, and, similarly, each $f^* \in (E^1)^* \setminus {e_1^*}$ to itself. However, it sends $e_1$ to $a e_1$ and $e_1^*$ to $a^{-1} e_1^*$. The automorphism depends only on the first edge $e_1$ of the cycle $e$. 
In the special case where $a = 1$, the automorphism $\sigma_{e,1}$ clearly reduces to the identity map on $L_K(E)$.

For $M \in L_K(E)\text{-Mod}$, we define the twisted left $L_K(E)$-module $M^{\sigma_{e,a}}$ by taking $M^{\sigma_{e,a}} := M$ as an abelian group, but modifying the left $L_K(E)$-action via
$$\ell\star m:=\sigma_{e,a}(\ell)m$$
for all $\ell \in L_K(E)$ and $m \in M$. Since $M$ is a unital left $L_K(E)$-module, we have $L_K(E)M=M$; therefore also $L_K(E)M^{\sigma_{e,a}}=M^{\sigma_{e,a}}$ is a unital module. The twisted left $L_K(E)$-module $\left({}_{L_K(E)}L_K(E)\right)^{\sigma_{e,a}}$
of  the left regular $L_K(E)$-module has also a structure of right $L_K(E)$-module, with the usual multiplication in $L_K(E)$. The two structures are compatible and therefore $\left(L_K(E)\right)^{\sigma_{e,a}}$ is a $L_K(E)$-bimodule. Indeed we have for $\lambda_1$,  $\lambda_2\in L_K(E)$, and $\ell\in L_K(E)^{\sigma_{e,a}}$
\[\lambda_1\star(\ell\lambda_2)=\sigma_{e,a}(\lambda_1)(\ell\lambda_2)=(\sigma_{e,a}(\lambda_1)\ell)\lambda_2=(\lambda_1\star\ell)\lambda_2.\]

Consider the family $\mathcal F$ of the finite sums of vertices. We can define a partial order $\leq$  on $\mathcal F$ setting $e\leq f$ if $ef=fe=e$ for each $e,f\in \mathcal F$. For each $e\in \mathcal F$, the corner ring $eL_K(E)e$ is a ring with unit $e$. For each left $L_K(E)$-module $M$ and any $e\in\mathcal F$ the abelian subgroup $eM$ of $M$ is a left $eL_K(E)e$-module. 
\begin{proposition}\cite[Lemma 1.5]{Ab83}\label{prop:tensor}
For each left $L_K(E)$-module $M$ we have the following isomorphism of abelian groups and left $L_K(E)$-modules
\[\varinjlim_{e\in\mathcal F}(L_K(E)e\otimes_{eL_K(E)e} eM)\cong M.\]
\end{proposition}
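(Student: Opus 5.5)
We would prove this by working entirely inside the directed system of corners, since $L_K(E)$ has local units but no identity. Write $L=L_K(E)$. For $e\le f$ in $\mathcal F$ the connecting maps
\[
\phi_{ef}:Le\otimes_{eLe}eM\to Lf\otimes_{fLf}fM
\]
are induced by the inclusions $Le\subseteq Lf$ and $eM\subseteq fM$. These are well defined: if $c\in eLe$, then $c=fcf\in fLf$, so the $eLe$-balanced relations map to $fLf$-balanced relations. They are left $L$-linear, and they satisfy $\phi_{fg}\phi_{ef}=\phi_{eg}$. We define
\[
\mu_e:Le\otimes_{eLe}eM\to M,\qquad \ell\otimes m\mapsto \ell m .
\]
This map is $eLe$-balanced and left $L$-linear, and $\mu_f\phi_{ef}=\mu_e$. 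Hence the $\mu_e$ induce an $L$-homomorphism $\mu:\varinjlim_{e\in\mathcal F}Le\otimes_{eLe}eM\to M$. The system is indexed by the directed poset $\mathcal F$: two finite sums of vertices $e,f$ lie below the sum of the vertices in the union of their supports. So the direct limit is a filtered colimit, computed as a union modulo eventual equality, and this is the key tool.

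Surjectivity comes from unitality, via the argument recalled in the previous section. For $m\in M$ there is $e\in\mathcal F$ with $em=m$, so $m=\mu_e(e\otimes m)$.

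Injectivity is the main step. Take an element of the limit represented by $t=\sum_i \ell_i\otimes m_i\in Le\otimes_{eLe}eM$ with $\sum_i\ell_im_i=0$. Choose $f\ge e$ with $f\ell_i=\ell_i$ for all $i$; such an $f$ exists by the local unit property applied on the left of the $\ell_i$. In $Lf\otimes_{fLf}fM$, since $\ell_i=f\ell_ie$ and $f\ell_i e\in fLf$, we can move the scalar across the tensor:
\[
\phi_{ef}(t)=\sum_i f\otimes \ell_i m_i=f\otimes\sum_i\ell_im_i=0 .
\]
The middle equality is the balancing $f\cdot(f\ell_ie)\otimes m_i=f\otimes (f\ell_ie)m_i$, which holds because $f\ell_i e$ is an element of $fLf$. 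Thus $t$ vanishes in the colimit, and $\mu$ is injective. The only delicate point is choosing $f$ large enough that each $\ell_i$ lies in $fLf$, and not merely in $Lf$. We need $\ell_i=f\ell_i$ and $\ell_i=\ell_i e$ with $e\le f$, which together give $\ell_i=f\ell_if$.

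Finally, the isomorphism of abelian groups is compatible with the left $L$-actions, because each $\mu_e$ and each $\phi_{ef}$ is $L$-linear, and the colimit of left $L$-modules is computed on the underlying abelian groups.
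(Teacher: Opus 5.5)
Your proof is correct and complete. Each step holds: the multiplication maps $\mu_e$ are compatible with the connecting maps, surjectivity follows from unitality, and injectivity follows by passing to an $f\ge e$ with $f\ell_i=\ell_i$, so that $\ell_i=f\ell_ie\in fLf$ and $\phi_{ef}(t)=f\otimes\mu_e(t)$.

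The paper gives no proof of this proposition; it only cites \cite[Lemma 1.5]{Ab83}. I cannot compare your route with the cited source, but your argument is a self-contained direct verification. It uses nothing about $L_K(E)$ beyond $\mathcal F$ being a directed set of idempotent local units.
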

In the sequel we will set 
\[L_K(E)\ot_{L_K(E)} M:=\varinjlim_{e\in\mathcal F}(L_K(E)e\otimes_{eL_K(E)e} eM).\]

\begin{lemma}\label{lemma:sigmafunctor}
If $M$ is a left $L_K(E)$-module, then the twisted left $L_K(E)$-module $M^{\sigma_{e,a}}$ is isomorphic to the tensor product  $\left(L_K(E)\right)^{\sigma_{e,a}}\ot_{L_K(E)}M$ of the $L_K(E)$-bimodule $\left(L_K(E)\right)^{\sigma_{e,a}}$ and the left $L_K(E)$-module $M$.
\end{lemma}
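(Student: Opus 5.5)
We construct the isomorphism at each finite level of the direct system in Proposition \ref{prop:tensor} and then pass to the limit. Write $L=L_K(E)$ and $\sigma=\sigma_{e,a}$. For a finite sum of vertices $f\in\mathcal F$ consider
\[\phi_f\colon L^{\sigma}f\otimes_{fLf} fM\to M^{\sigma},\qquad \ell\otimes m\mapsto \ell m .\]
This map is well defined, since it is balanced over $fLf$: $(\ell\lambda)\otimes m$ and $\ell\otimes\lambda m$ both go to $\ell\lambda m$. It is $L$-linear for the twisted action: $\lambda\star(\ell\otimes m)=\sigma(\lambda)\ell\otimes m\mapsto\sigma(\lambda)\ell m=\lambda\star(\ell m)$.

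The maps $\phi_f$ are compatible with the transition maps of the direct system. If $f\le g$, the transition map sends $\ell\otimes m\in L^\sigma f\otimes fM$ to $\ell\otimes m\in L^\sigma g\otimes gM$, because $\ell f=\ell$ and $fm=m$ give $\ell g=\ell f g=\ell f=\ell$ and similarly for $m$. So the $\phi_f$ induce an $L$-homomorphism
\[\phi\colon L^{\sigma}\ot_{L} M\to M^{\sigma}.\]

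The cleanest way to show $\phi$ is bijective is to compare it with the untwisted case rather than argue directly. Since $\sigma$ fixes every vertex, it restricts to a ring automorphism of $fLf$, and $\sigma(L f)=Lf$ as sets. Define
\[\psi_f\colon L^\sigma f\otimes_{fLf} fM\to Lf\otimes_{fLf}fM,\qquad \ell\otimes m\mapsto \ell\otimes m,\]
i.e.\ the identity on the underlying groups. This is well defined because $L^\sigma f$ and $Lf$ coincide as right $fLf$-modules: the right action on $L^\sigma$ is the ordinary multiplication. It is an isomorphism of abelian groups. In the limit it gives a group isomorphism $L^\sigma\ot_L M\to L\ot_L M$. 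Composing with the isomorphism $L\ot_L M\cong M$ of Proposition \ref{prop:tensor}, given by $\ell\otimes m\mapsto\ell m$, we obtain exactly $\phi$ on underlying groups. Hence $\phi$ is a bijective $L$-homomorphism, and so $M^\sigma\cong L^\sigma\ot_L M$.

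The main point to check is that the identification $L^\sigma f\otimes_{fLf}fM= Lf\otimes_{fLf}fM$ is legitimate. The tensor product only involves the right $fLf$-structure, which is untwisted, and $L^\sigma f=Lf$ as sets, so this holds. Unitality of $M^\sigma$, noted before the lemma, ensures the target is an honest unital module.
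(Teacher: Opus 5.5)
Your proof is correct and is essentially the paper's argument with the details written out. The paper twists the isomorphism $L_K(E)\ot_{L_K(E)}M\cong M$ of Proposition~\ref{prop:tensor} and notes that $\bigl(L_K(E)\ot_{L_K(E)}M\bigr)^{\sigma_{e,a}}\cong L_K(E)^{\sigma_{e,a}}\ot_{L_K(E)}M$, since the tensor product only involves the untwisted right structure; your map $\phi$ is exactly the composite of your identification $\psi$ with the multiplication isomorphism.
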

\begin{proof}
By Proposition~\ref{prop:tensor} we have the following isomorphism of abelian groups:
\[L_K(E)\ot_{L_K(E)} M\cong M.\]
Then we get easily the following isomorphisms of left $L_K(E)$-modules
\[M^{\sigma_{e,a}}\cong \left(L_K(E)\ot_{L_K(E)}M\right)^{\sigma_{e,a}}\cong L_K(E)^{\sigma_{e,a}}\ot_{L_K(E)}M.\qedhere\]
\end{proof}
As consequence of Lemma~\ref{lemma:sigmafunctor}, we get that the automorphism $\sigma_{e,a}$ determines an auto-equivalence of the category $L_K(E)\text{-Mod}$, given by the functor
 \[L_K(E)^{\sigma_{e,a}}\ot_{L_K(E)}-: \ \ L_K(E)\text{-Mod}\to L_K(E)\text{-Mod}\]
 which assigns to each $L_K(E)$-module $M$ the  module $ L_K(E)^{\sigma_{e,a}} \ot M\cong M^{\sigma_{e,a}}$. 
 In particular for every $a \in K$ with $a \neq 0$, the twisted module $V^{\sigma_{e,a}}_{[e^\infty]}$ is again a simple left $L_K(E)$-module.

%\section{A projective resolution of $V_{[e^\infty]}^{\sigma_{e,a}}$}
%
%In a ring with local units $R$, for each idempotent $u\in R$, the left $R$-module $Ru$ is a finitely generated unitary projective module.

In \cite[Theorem 2.8]{AMT15} we proved, under the hypothesis that $E$ is a finite graph, that $V_{[e^\infty]}$ is finitely presented and that, denoted by $\rho_{e-s(e)}$ and $\rho_{e^\infty}$ the right multiplication by $e-s(e)$ and $e^\infty$, respectively, a projective resolution of $V_{[e^\infty]}$ is given by
\[\xymatrix{0\ar[r]&L_K(E)s(e)\ar[r]^{\rho_{e-s(e)}}&L_K(E)s(e)\ar[r]^-{\rho_{e^\infty}}&V_{[e^\infty]}\ar[r]&0}.\]
The same proof works for an arbitrary graph $E$: observe that since $s(e)$ is an idempotent element in $L_K(E)$, the left $L_K(E)$-module $L_K(E)s(e)$ is a unital projective module.
The previous exact sequence is equivalent to 
\[\xymatrix{0\ar[r]&L_K(E)(e-s(e))\ar@{^(->}[r]^-{\iota}&L_K(E)s(e)\ar[r]^-{\rho_{e^\infty}}&V_{[e^\infty]}\ar[r]&0}.\]

Applying the auto-equivalence $L_K(E)^{\sigma_{e,a}}\ot_{L_K(E)}-$ we get the short exact sequence of left $L_K(E)$-modules
\[\xymatrix{0\ar[r]&L_K(E)(e-s(e))^{\sigma_{e,a}}\ar@{^(->}[r]^-{\iota}&(L_K(E)s(e))^{\sigma_{e,a}}\ar[r]^-{\rho_{e^\infty}}&V_{[e^\infty]}^{\sigma_{e,a}}\ar[r]&0}.\]

The automorphism $\sigma_{e,a}$ of the algebra $L_K(E)$ induces the isomorphism of left $L_K(E)$-modules
\[L_K(E)s(e)\to \left(L_K(E)s(e)\right)^{\sigma_{e,a}},\quad \ell s(e)\mapsto \sigma_{e,a}(\ell s(e))=\sigma_{e,a}(\ell)s(e)=\ell\star s(e).\]
We continue to denote it by $\sigma_{e,a}$. It induces the isomorphism
 \begin{align*}\psi:L_K(E)(a^{-1}&e-s(e))\to L_K(E)(e-s(e))^{\sigma_{e,a}},
 \\
 &\ell (a^{-1}e-s(e))\mapsto \sigma_{e,a}(\ell (a^{-1}e-s(e)))=\sigma_{e,a}(\ell)(e-s(e)).
% 
% \ell(a^{-1}e-s(e))\star s(e)=\ell\star (e-s(e))=(\ell(e-s(e)))^{\sigma_{e,a}}.
 \end{align*}
 Since
 \[\iota\circ\psi(\ell(a^{-1}e-s(e)))=\sigma_{e,a}\circ\iota(\ell(a^{-1}e-s(e)))\]
 we get the following commutative diagram:
 \[
 \xymatrix{0\ar[r]&L_K(E)(e-s(e))^{\sigma_{e,a}}\ar@{^(->}[r]^-{\iota}&(L_K(E)s(e))^{\sigma_{e,a}}\ar[r]^-{\rho_{e^\infty}}&V_{[e^\infty]}^{\sigma_{e,a}}\ar[r]&0\\
 0\ar[r]&L_K(E)(a^{-1}e-s(e))\ar@{^(->}[r]^-{\iota}\ar[u]^\psi&L_K(E)s(e)\ar[r]^-{\rho_{e^\infty}{ \circ}\sigma_{e,a}}\ar[u]^{\sigma_{e,a} }&V_{[e^\infty]}^{\sigma_{e,a}}\ar[r]\ar@{=}[u]&0
 } \]
We have proved
 \begin{theorem}\label{thm:parte1}
 Let $E$ be any graph. Let $e$ be a simple closed path in $E$, and $0\not=a\in K$. Then the Chen simple module $V_{[e^\infty]}^{\sigma_{e,a}}$ is finitely presented and a projective
resolution of $V_{[e^\infty]}^{\sigma_{e,a}}$ is given by
\[\xymatrix{0\ar[r]& L_K(E)(a^{-1}e-s(e))
 \ar[r]^-{ \iota}&L_K(E)s(e)\ar[rr]^-{\rho_{e^\infty}\circ\sigma_{e,a}}&&V_{[e^\infty]}^{\sigma_{e,a}}\ar[r]&0.}
 \]
 %where $\rho^\sigma_{e^\infty}(\ell s(e))=\sigma_{e,a}(\ell)e^\infty$.
 \end{theorem}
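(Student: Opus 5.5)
The plan is to transport the known untwisted resolution along the auto-equivalence induced by $\sigma_{e,a}$, and then rewrite the transported sequence in terms of ideals of $L_K(E)$ itself.

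\textbf{The untwisted case.} I start from the resolution of $V_{[e^\infty]}$ from \cite[Theorem 2.8]{AMT15}, namely
\[0\to L_K(E)s(e)\xrightarrow{\rho_{e-s(e)}}L_K(E)s(e)\xrightarrow{\rho_{e^\infty}}V_{[e^\infty]}\to 0.\]
I first check that its proof never uses finiteness of $E$.

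Surjectivity of $\rho_{e^\infty}$ is clear. Any basis element $\lambda e^\infty$ equals $\lambda e^\infty$ applied to $e^\infty$, up to choosing $\lambda$ ending at $s(e)$ (possibly after unwinding a prefix of $e$). Also $(e-s(e))e^\infty=0$, so the composite is zero.

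Injectivity of $\rho_{e-s(e)}$ follows from a grading/degree argument. If $\ell(e-s(e))=0$, then $\ell e=\ell$, and iterating gives $\ell=\ell e^n$ for all $n$. Writing $\ell=\sum k_i\alpha_i\beta_i^*$ with $\beta_i$ of bounded length, the terms $\alpha_i\beta_i^*e^n$ reduce via CK1 to monomials whose real parts grow. Comparing in the $\mathbb Z$-grading with a normal basis (Alahmadi–Alsulami–Jain–Zelmanov) forces $\ell=0$.

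Exactness in the middle is the main obstacle. I must show that $\ker\rho_{e^\infty}=L_K(E)(e-s(e))$. My approach is as follows.
\begin{itemize}
\item Reduce an element $\ell s(e)$ modulo $L_K(E)(e-s(e))$ to a combination of monomials $\alpha\beta^*$ with $\beta$ a proper initial segment of some power of $e$. For this I use $\beta^*=\beta^*e^*e$ when $\beta$ is a power of $e$, together with $\gamma^*e=0$ when $\gamma$ is not comparable with $e$.
\item Observe that such monomials send $e^\infty$ to pairwise distinct basis paths, unless they coincide modulo the relation.
\end{itemize}
All relations used here (CK1, and CK2 only at vertices where it applies) are local, so an arbitrary graph causes no trouble.

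\textbf{Twisting.} Apply the auto-equivalence $L_K(E)^{\sigma_{e,a}}\ot_{L_K(E)}-$ of Lemma~\ref{lemma:sigmafunctor}. It preserves exactness and projectivity, since it is an equivalence and twisting by an automorphism fixing vertices sends $L_K(E)s(e)$ to an isomorphic module. Hence
\[0\to (L_K(E)(e-s(e)))^{\sigma_{e,a}}\to (L_K(E)s(e))^{\sigma_{e,a}}\to V^{\sigma_{e,a}}_{[e^\infty]}\to0\]
is exact.

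\textbf{Identifying the terms.} I use the explicit isomorphism $\sigma_{e,a}\colon L_K(E)s(e)\to(L_K(E)s(e))^{\sigma_{e,a}}$, $\ell s(e)\mapsto\sigma_{e,a}(\ell)s(e)$. It is $L_K(E)$-linear because $\sigma_{e,a}(\lambda\ell)=\lambda\star\sigma_{e,a}(\ell)$. The key computation is $\sigma_{e,a}(e)=ae$, since only $e_1$ gets scaled. Hence
\[\sigma_{e,a}(a^{-1}e-s(e))=e-s(e),\]
and so $\sigma_{e,a}$ carries the submodule $L_K(E)(a^{-1}e-s(e))$ onto $(L_K(E)(e-s(e)))^{\sigma_{e,a}}$. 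This gives the isomorphism $\psi$ and the commutative diagram displayed above. The five-lemma (really, just the vertical isomorphisms) then transfers exactness to the bottom row.

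Finally, the map $\rho_{a^{-1}e-s(e)}\colon L_K(E)s(e)\to L_K(E)(a^{-1}e-s(e))$ is an isomorphism. This follows from the injectivity argument above applied to $a^{-1}e$, or equivalently by transport via $\sigma_{e,a}$. So the kernel term is projective, isomorphic to $L_K(E)s(e)$, and $V_{[e^\infty]}^{\sigma_{e,a}}$ is finitely presented with the stated resolution.
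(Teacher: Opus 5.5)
Your proposal is correct and follows the paper's proof essentially step by step. You start from the untwisted resolution of \cite[Theorem 2.8]{AMT15}, apply the auto-equivalence $L_K(E)^{\sigma_{e,a}}\ot_{L_K(E)}-$, and identify the twisted terms through $\sigma_{e,a}$ and $\psi$ using $\sigma_{e,a}(a^{-1}e-s(e))=e-s(e)$. Beyond what the paper writes, you also sketch why that argument needs no finiteness of $E$ (via the $\mathbb Z$-grading and the reduction $x\equiv xe$ modulo $L_K(E)(e-s(e))$) and why the kernel $L_K(E)(a^{-1}e-s(e))\cong L_K(E)s(e)$ is projective, both of which the paper only asserts.
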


\section{A projective resolution of  $V^p_{[e^\infty]}$}

In \cite{AR14} Ara and Rangaswamy generalised further the construction of simple modules through the twisting associated to a cycle and a non-zero element of an algebraic simple extension of $K$. Let $p(x) \in K[x]$ be a \emph{basic irreducible} polynomial of degree $\geq 2$—meaning that $p(x)$ is irreducible in $K[x]$, $\deg p(x)\geq 2$,  and it satisfies $p(0) = -1$. 
Assume $p(x)=p_nx^n+\cdots+p_1x-1$. Define $K' := K[x]/\langle p(x) \rangle$ \R{and let $\overline{x}_p$, or simply $\x$ when the polynomial $p(x)$ is clear from the context,} denote the coset $x + \langle p(x) \rangle \in K'\setminus K$. Clearly $\overline{x}\not=0$ and hence is invertible in $K'$: 
\[\overline{x}^{-1}=p_n\overline x^{n-1}+\cdots+p_2\overline x+p_1.\]
Using the gauge automorphism $\sigma_{e,\overline x}$ of $L_{K'}(E)$, the Leavitt path algebra associated to the graph $E$ and the extension $K'$ of $K$, we can consider the simple left $L_{K'}(E)$-module $V^{\sigma_{e,\overline x}}_{[e^\infty]}$.
\begin{definition}
We define $V^p_{[e^\infty]}$ to be the left $L_K(E)$-module obtained by restricting scalars from $K'$ to $K$ on the twisted left $L_{K'}(E)$-module $V^{\sigma_{e,\overline{x}}}_{[e^\infty]}$.
\end{definition}
The following result   slightly  generalises \cite[Lemma~3.3]{AR14}, following essentially the same proof, to any cycle in any graph $E$:
\begin{proposition}\label{prop:Vpsimple}
The left $L_K(E)$-module $V^p_{[e^\infty]}$ is simple.
\end{proposition}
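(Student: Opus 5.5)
The plan is to show directly that every nonzero element of $V^p_{[e^\infty]}$ generates the whole module over $L_K(E)$. Recall that, as a $K$-vector space, $V^{\sigma_{e,\x}}_{[e^\infty]}$ is the $K'$-space with basis $[e^\infty]$; hence its $K$-basis is $\{\x^i\mathfrak q: 0\le i<n,\ \mathfrak q\in[e^\infty]\}$, with $n=\deg p$. The action of $\ell\in L_K(E)\subseteq L_{K'}(E)$ is $\ell\star m=\sigma_{e,\x}(\ell)m$. Vertices and edges $f\ne e_1$ (and their ghosts) act as in Chen's module, while $e_1$ acts as $\x e_1$ and $e_1^*$ as $\x^{-1}e_1^*$.

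\textbf{Step 1: reduce to a multiple of $e^\infty$.} Take $0\ne m=\sum_j k'_j\mathfrak q_j$ with $k'_j\in K'$ nonzero and the $\mathfrak q_j$ distinct. Write $\mathfrak q_j=\lambda_j e^\infty$ and choose the representations so that the finite prefixes $\lambda_j$ all have length $\ge N$ for a common large $N$. Then pick a path $\mu$ equal to the first $N'$ edges of $\mathfrak q_1$, with $N'$ large enough that $\mu$ ends at $s(e)$ after an integer number of rounds of $e$ past the prefixes. Applying $\mu^*$ kills every $\mathfrak q_j$ that does not begin with $\mu$. Since the $\mathfrak q_j$ are distinct and finitely many, taking $N'$ large leaves only those equal to $\mathfrak q_1$ from position $0$; all others differ at some finite position. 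The result is $\mu^*\star m=c\,e^\infty$ with $0\ne c\in K'$, since $\mu^*$ acts as $\x^{-t}\mu^*$ for $t$ the number of occurrences of $e_1$ in $\mu$.

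\textbf{Step 2: realize all $K'$-multiples.} From $c\,e^\infty$ we must reach every $c' e^\infty$ using only $K$-linear combinations of elements of $L_K(E)$. Here $e\star e^\infty=\x e\,e^\infty=\x e^\infty$, and $s(e)\star e^\infty=e^\infty$. Hence for any $g(x)\in K[x]$, the evaluation $g(e)$ (defined as in Section~\ref{sec:Extending scalars} with $v=s(e)$) acts on $c e^\infty$ as multiplication by $g(\x)$. Since $K'=K[\x]$ is a field, choose $g$ with $g(\x)=c^{-1}c'$. Therefore $L_K(E)\star m\supseteq K'e^\infty$.

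\textbf{Step 3: generate everything.} Any basis element $\lambda e^\infty$ equals, up to a nonzero power of $\x$, $\lambda\star e^\infty$. Combined with Step 2, this gives $K'\lambda e^\infty\subseteq L_K(E)\star m$, so the submodule is all of $V^p_{[e^\infty]}$. Nonzeroness of the module is clear.

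The main obstacle is Step 1 when $e$ is not exclusive, as here in contrast with \cite{AR14}. Distinct tail-equivalent paths could share long prefixes, so one must argue carefully that a single long enough prefix $\mu$ separates $\mathfrak q_1$ from the finitely many others. The key point is that two distinct infinite paths differ at some finite position, so a prefix longer than all these positions works. The bookkeeping of the powers of $\x$ introduced by $e_1,e_1^*$ is harmless, since these powers are invertible.
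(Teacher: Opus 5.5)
Your proof is correct and follows essentially the same route as the paper. You isolate a nonzero $K'$-multiple of $e^\infty$ by applying a ghost path, use $e\star-$ (multiplication by $\overline{x}$) to get all of $K'e^\infty$, and then reach every $\nu e^\infty$ with real paths; the only differences are cosmetic: the paper writes each summand canonically as $\mu_i e^\infty$ with $\mu_i$ not right divisible by $e$ and applies $\mu_1^*$ for the longest $\mu_1$ (where you take a long enough prefix of $\mathfrak q_1$), and it obtains $K'e^\infty$ from closure of a $K$-subspace under multiplication by $\overline{x}$ rather than from explicit evaluations $g(e)$.
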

\begin{proof}
Let $U$ be a non zero left $L_K(E)$-submodule of $V^p_{[e^\infty]}$. First, let us prove that if $\lambda e^\infty\in U$ with $0\not=\lambda\in K'$, then also $\lambda' e^\infty\in U$ for each $\lambda'\in K'$. Since $U$ is a $K$-vector space, it is enough to prove that $\x \lambda e^\infty\in U$. This follows by
\[U\ni e\star \lambda e^\infty=\x\lambda e^\infty.\]
Let $\sum_{i=1}^\ell \lambda_i\mu_ie^\infty$ be a non zero element in $U$ with $0\not=\lambda_i\in K'$ and the $\mu_i$ distinct paths not right divisible by $e$. Without loss of generality, we can assume that the length of $\mu_1$ is larger than or equal to the length of all the other paths $\mu_2$, ..., $\mu_\ell$. Denote by $\#_e(\mu_1)$ the number of occurrences of $e_1$ in $\mu_1$. We have
\[U\ni \mu_1^*\star \left(\sum_{i=1}^\ell \lambda_i\mu_ie^\infty\right)=\x^{-\#_e(\mu_1)}\lambda_1e^\infty\]
with $0\not= \x^{-\#_e(\mu_1)}\lambda_1\in K'$.
Then for what we proved at the beginning of this proof we have that $\lambda'e^\infty\in U$ for each $\lambda'\in K'$.
Consider now any infinite path $\nu e^\infty$ with $\nu$ finite path not right divisible by $e$, and any $\lambda'\in K'$. Then 
\[\lambda'\nu e^\infty=\nu \lambda' e^\infty=\nu (e^*)^{\#_e(\nu)}\star \lambda' e^\infty\in U;\]
hence $U$ contains all the $K'$-multiples of infinite paths tail equivalent to $e^\infty$ and therefore $U=V^p_{[e^\infty]}$.
%Since $U$ contains
%\[(\nu (e^*)^{\#_e(\mu_1)})\star \lambda_1e^\infty=\lambda_1 \nu e^\infty\in L.\]
%Finally, to prove that $k' \nu e^\infty\in L$ for each $k'\in K'$, being $L$ a $K$-vector space it is enough to prove that $\x(\lambda_1\nu e^\infty)$ belongs to $L$. This follows by
%\[(\nu e^{\#_e(\nu)+1} \nu^*)\star \lambda_1 \nu e^\infty\]
%
%$\mathfrak p=f_1\cdots f_\ell e^\infty$ be any infinite path in $[e^\infty]$. It is not restrictive to assume $\ell\geq N$ and $f_{\ell-N+1}=e_1$, ..., $f_\ell=e_N$. Denote by $m:=\min\{i\mid f_i=e_1, 1\leq i\leq \ell\}$. Then, in the left $L_{K'}(E)$-module $V^{\sigma_{e,\overline x}}_{[e^\infty]}$, we have
%\[\overline x \mathfrak p =\overline x f_1\cdots f_{m-1}e_1 f_{m+1}\cdots f_\ell e^\infty=
%f_1\cdots f_{m-1}e_1\star  f_{m+1}\cdots f_\ell e^\infty.\]
%Therefore any $L_K(E)$-submodule of $V^p_{[e^\infty]}$ is also an $L_{K'}(E)$-submodule of $V^{\sigma_{e,\overline x}}_{[e^\infty]}$. Since the latter is simple, also $V^p_{[e^\infty]}$ is simple.
\end{proof}

%The set of all regular monomial, i.e., the set $\mathcal B$ of all paths $\lambda \mu^*$ such that $\lambda,\mu\in \text{Path}(E)$ and $r(\lambda)=r(\mu)$ is a $K$-basis for the Cohn algebra $C_K(E)$ \cite[Proposition 1.5.6]{AAM}. For each regular vertex $v\in \text{Reg}(E)\subseteq E^0$, let $\{\varepsilon^v_1, \dots,\varepsilon^v_{n_v}\}$ be an enumeration of the elements of $s^{-1}(v)$. Then 
%\begin{lemma} \cite[Corollary 1.5.12]{AAM}\label{lemma:defbase}
%The set
%\[\mathcal B'':=\mathcal B\setminus\{\lambda\varepsilon^v_{n_v}(\varepsilon^v_{n_v})^*\mu^*\mid r(\lambda)=r(\mu)=v\in \text{Reg}(E)\}\]
%is a $K$-basis for $L_K(E)$.
%\end{lemma}
%Moreover one has:
%\begin{lemma}\label{lemma:base}
%If $e=e_1\cdots e_n$ is a real cycle in $E$ and $\lambda \mu^*$ is an element of the basis $\mathcal B''$  of $L_K(E)$, then 
%\[\lambda \mu^*e\not=0\Rightarrow \lambda \mu^*e=\lambda'{\mu'}^*\in\mathcal B''.
%\]
%\end{lemma}
%\begin{proof}
%The path $\lambda \mu^*e$ is not 0 if an only if either $\mu$ is an initial part of $e$ or $e$ is an initial part of $\mu$:
%\[\mu=e_1\cdots e_\ell,\ 1\leq\ell\leq n,\quad\text{or}\quad \mu=e_1\cdots e_n\mu'.\]
%In the first case $\lambda \mu^*e=\lambda e_{\ell+1}\cdots e_n\in\mathcal B''$; in the second case
%$\lambda \mu^*e=\lambda{\mu'}^*\in\mathcal B''$.
%\end{proof}

\begin{theorem}\label{thm:Nam}
Let $e=e_1\cdots e_N$ be a cycle in $E$, and $p(x) \in K[x]$ a basic irreducible polynomial of degree $n\geq 2$. Then the Chen simple module $V_{[e^\infty]}^{p}$ is finitely presented and a projective
resolution of $V_{[e^\infty]}^{p}$ is given by
\[\xymatrix{0\ar[r]& L_K(E)p(e)
 \ar[r]^-{ \iota}&L_K(E)s(e)\ar[rr]^-{\rho_{e^\infty}\circ\sigma_{e,\x}\circ i}&&V_{[e^\infty]}^{p}\ar[r]&0}
 \]
 where $i:L_K(E)s(e)\to L_{K'}(E)s(e)$ is the natural embedding of left $L_K(E)$-modules.
\end{theorem}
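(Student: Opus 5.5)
Our plan is to reduce the statement to Theorem~\ref{thm:parte1} over the field $K'$ and then descend to $K$ by means of Lemma~\ref{thm:main}, applied with $A=L_K(E)$, so that $A'=L_K(E)\otimes_K K'\cong L_{K'}(E)$, with $v=s(e)$, with $e$ the cycle, and with $q(x)=p(x)$. First we use Theorem~\ref{thm:parte1} for the graph $E$ over $K'$ with $a=\x$. It gives the short exact sequence of $L_{K'}(E)$-modules
\[0\to L_{K'}(E)(\x^{-1}e-s(e))\to L_{K'}(E)s(e)\xrightarrow{\rho_{e^\infty}\circ\sigma_{e,\x}} V^{\sigma_{e,\x}}_{[e^\infty]}\to 0 .\]
Since $\x$ is invertible in $K'$, we have $L_{K'}(E)(\x^{-1}e-s(e))=L_{K'}(E)(e-\x s(e))$. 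Restricting scalars to $L_K(E)$, the target becomes $V^p_{[e^\infty]}$ by definition.

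Next we compose with the embedding $i:L_K(E)s(e)\to L_{K'}(E)s(e)$. The kernel of $\rho_{e^\infty}\circ\sigma_{e,\x}\circ i$ is then
\[L_K(E)s(e)\cap L_{K'}(E)(e-\x s(e)).\]
By Lemma~\ref{thm:main} it equals $L_K(E)p(e)$; note that $L_K(E)p(e)\subseteq L_K(E)s(e)$ because $p(e)s(e)=p(e)$. The lemma is stated for monic $q$, and $p$ need not be monic. This is harmless: we normalize $p$ by $p_n^{-1}$, which changes neither $K'$, nor $\x$, nor the left ideal $L_K(E)p(e)$.

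Surjectivity is the step we expect to be the main obstacle, since $i$ is far from surjective. Here we use Proposition~\ref{prop:Vpsimple}: $V^p_{[e^\infty]}$ is simple, so it suffices to see that the image is nonzero. Indeed $s(e)\mapsto e^\infty\neq 0$. Alternatively, surjectivity can be checked directly: $\x^k\mu e^\infty$ is the image of a suitable $\mu (e^*)^{m}$ or $\mu e^{m}$, as in the proof of Proposition~\ref{prop:Vpsimple}.

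It remains to show that $L_K(E)p(e)$ is projective and that the resolution is a finite presentation. For this we show that right multiplication by $p(e)$ is injective on $L_K(E)s(e)$, so that $L_K(E)s(e)\cong L_K(E)p(e)$ is projective and finitely generated. Suppose $\ell p(e)=0$ with $\ell\in L_K(E)s(e)$. Factor $p(x)=r(x)(x-\x)$ in $K'[x]$, as in the proof of Lemma~\ref{thm:main}; then $p(e)=r(e)(e-\x s(e))$. Since $\x\ne0$, the element $e-\x s(e)$ is a nonzero scalar multiple of $\x^{-1}e-s(e)$, so right multiplication by it is injective on $L_{K'}(E)s(e)$, by the exactness in Theorem~\ref{thm:parte1}. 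Hence it suffices to show that right multiplication by $r(e)$ is injective on $L_{K'}(E)s(e)$. For this we factor $r$ over a splitting field $K''$ of $p$ into linear factors $x-\alpha$ with $\alpha\neq 0$, since $p(0)=-1$. Each factor $e-\alpha s(e)$ acts injectively, again by Theorem~\ref{thm:parte1} applied over $K''$ with $a=\alpha$, and $L_{K'}(E)s(e)$ embeds in $L_{K''}(E)s(e)$. This gives injectivity, so the displayed sequence is a projective resolution of length one consisting of finitely generated projectives.
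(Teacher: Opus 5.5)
Your proof is correct and follows the paper's argument step for step: Theorem~\ref{thm:parte1} over $K'$ with $a=\x$, the kernel of $\rho_{e^\infty}\circ\sigma_{e,\x}\circ i$ computed as $L_K(E)s(e)\cap L_{K'}(E)(e-\x s(e))$, surjectivity from the simplicity of $V^p_{[e^\infty]}$ (Proposition~\ref{prop:Vpsimple}), and Lemma~\ref{thm:main} applied after normalizing $p$ by $p_n^{-1}$. Your last paragraph proves that right multiplication by $p(e)$ is injective on $L_K(E)s(e)$, so $L_K(E)p(e)\cong L_K(E)s(e)$ is projective; this fills in a point the paper leaves implicit (and uses later, in the Ext computations). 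Two small corrections there: the injectivity you invoke comes from the $\rho_{e-s(e)}$ form of the resolution recalled before Theorem~\ref{thm:parte1}, transported by $\sigma_{e,a}$, not from the inclusion $\iota$ in Theorem~\ref{thm:parte1}. It also follows in one line, with no splitting field: compare lowest-degree homogeneous components in $\ell=\ell(p_ne^n+\cdots+p_1e)$ for the $\mathbb Z$-grading of $L_K(E)$, where $e$ has degree $N\geq 1$.
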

\begin{proof}
By Theorem~\ref{thm:parte1} we have the following short exact sequence of $L_{K'}(E)$-modules
\[\xymatrix{0\ar[r]& L_{K'}(E)(\overline x^{-1}e-s(e))
 \ar[r]^-{ \iota}&L_{K'}(E)s(e)\ar[rr]^-{\rho_{e^\infty}\circ\sigma_{e,\x}}&&V_{[e^\infty]}^{\sigma_{e,\overline x}}\ar[r]&0.}
 \]
 Observed that $L_{K'}(E)(\overline x^{-1}e-s(e))=L_{K'}(E)(e-\overline xs(e))$, restricting the scalars from $K'$ to $K$, we have the following commutative diagram of left $L_{K}(E)$-modules
 \[\xymatrix{0\ar[r]& L_{K'}(E)(e-\overline xs(e))
 \ar[r]^-{ \iota}&L_{K'}(E)s(e)\ar[rr]^-{\rho_{e^\infty}\circ\sigma_{e,\x}}&&V_{[e^\infty]}^{p(x)}
 \ar[r]&0\\
 0\ar[r]& L_{K'}(E)(e-\overline xs(e))\cap L_K(E)s(e)\ar@{^(->}[u]
 \ar[r]^-{\iota}&L_K(E)s(e)\ar@{^(->}[u]^i
 %\ar[rr]^-{\rho_{e^\infty}}&&V_{[e^\infty]}^{p(x)}\ar@{=}[u]\ar[r]&0
 }
 \]
 Since by Proposition~\ref{prop:Vpsimple} $V_{[e^\infty]}^{p}$ is a simple $L_K(E)$-module and 
 $(\rho_{e^\infty}\circ\sigma_{e,\x}\circ i)(s(e))=e^\infty\not=0$, the homomorphism $\rho_{e^\infty}\circ\sigma_{e,\x}\circ i$ is surjective and we have the short exact sequence of left $L_K(E)$-modules
 \[\xymatrix{0\ar[r]& L_{K'}(E)(e-\overline xs(e))\cap L_K(E)s(e)\
 \ar[r]^-{\iota}&L_K(E)s(e)\ar[rr]^-{\rho_{e^\infty}\circ\sigma_{e,\x}\circ i}&&V_{[e^\infty]}^{p(x)}
 \ar[r]&0}\]
By \Cref{thm:main} $L_K(E)s(e)\cap L_{K'}(E)(e-\x s(e))=L_K(E)p_n^{-1}p(e)=L_K(E)p(e)$.
\end{proof}

\begin{remark}
\Cref{thm:Nam} proves $V_{[e^\infty]}^{p}\cong L_K(E)s(e)/L_K(E)p(e)$. In case $e$ is an exclusive cycle, this show explicitly that the simple modules introduced by Ara and Rangaswamy in \cite{AR14} are isomorphic to those studied by Anh and Nam in \cite{AN20}.
\end{remark}
\section{Extensions}

In this section we apply the previous results to compute the $K$-dimension of the vector space $\Ext(V_{[e^\infty]}^{p}, V_{[c^\infty]}^{q})$ of the extensions of two simple modules associated to cycles $e$ and $c$, and basic irreducible polynomials $p(x)$ and $q(x)$, respectively.

Let us consider the short exact sequence
\[\xymatrix{0\ar[r]&L_K(E)s(e)\ar[rr]^{\pi_{p(e)}}&&L_K(E)s(e)\ar[r]&V^{p}_{[e^\infty]}\ar[r]&0}\]
where $\pi_{p(e)}$ is the right product by $p(e)$.
Since $L_K(E)s(e)$ is a projective left $L_K(E)$-module,  applying the contravariant functor $\Hom_{L_K(E)}(-,V_{[c^\infty]}^{q})$ we get the following exact sequence of $K$-vector spaces
\begin{align*}
\xymatrix{\Hom_{L_K(E)}(L_K(E)s(e),V_{[c^\infty]}^{q})\ar[r]^{\pi_{p(e)}^*}&\Hom_{L_K(E)}(L_K(E)s(e),V_{[c^\infty]}^{q})\ar@{-}[r]^-\psi&{}}\\
\xymatrix{{}\ar[r]&\Ext^1(V^{p}_{[e^\infty]},V_{[c^\infty]}^{q})\ar[r]&0,}
\end{align*}
where $(\pi_{p(e)}^*(\varphi))(s(e))=(\varphi\circ \pi_{p(e)})(s(e))=\varphi(p(e))=p(e)\star \varphi(s(e))$. If $(e,p(x))\not=(c,q(x))$ then $V^{p}_{[e^\infty]}$ and $V_{[c^\infty]}^{q}$ are non isomorphic simple modules and therefore $\pi^*_{p(e)}$ is a monomorphism.

For any left $L_K(E)$-module $M$, the abelian group $\Hom(L_K(E)s(e),M)$ is isomorphic to
$s(e) M:=\{m\in M:s(e)m=m\}$: indeed, $s(e)\mapsto s(e)m$ defines an element of $\Hom(L_K(E)s(e),M)$ for any $ m\in s(e)M$. Therefore we get the exact sequence of $K$-vector spaces
\[\xymatrix{s(e)V_{[c^\infty]}^{q}\ar[rr]^{p(e)\star-}&& s(e)V_{[c^\infty]}^{q}\ar[r]&\Ext^1(V^{p}_{[e^\infty]},V_{[c^\infty]}^{q})\ar[r]&0}\]
where the morphism of $K$-vector spaces  $p(e)\star-$ is the left multiplication by $p(e)$ in $s(e)V_{[c^\infty]}^{q}$:
\[p(e)\star s(e)\lambda=(p(e))^{\sigma_{c,\x_q}}.\]
Clearly, if $s(e)V_{[c^\infty]}^{q}=0$, i.e., there are no real paths starting in $s(e)$ and ending in $s(c)$,  we get immediately $\Ext^1(V^{p}_{[e^\infty]},V_{[c^\infty]}^{q})=0$ for all polynomials $p(x)$ and $q(x)$. Therefore  in the sequel we can assume that $s(e)\Path(E)s(c)\not=\emptyset$. Let us consider the non empty set
\[
L_{(e,c)}:=\{\lambda\in E^\infty\mid \lambda\sim c^\infty, e\not|\,\lambda, s(\lambda)=s(e)\}.\]

If the cycles $c$ and $e$ are exclusive, we can compute the $K$-dimension of $\Ext^1(V^{p}_{[e^\infty]},V_{[c^\infty]}^{q})$  in terms of the cardinality of  $L_{(e, c)}$ and of the degrees of the polynomials $p(x)$ and $q(x)$.

\begin{theorem}\label{thm:ext}
Let $c$ and $e$ be two exclusive cycles in the graph $E$, and $p(x)$ and $q(x)$ two basic irreducible polynomials in $K[x]$. Then
\[
\dim_K \Ext^1(V^p_{[c^\infty]}, V^q_{[e^\infty]})=\begin{cases}
 |L_{e,c}|\times\deg p(x)\times\deg q(x)     & \text{if }c\not=e, \\
 0    & \text{if }c=e,\text{ and }p(x)\not=q(x) \\
 \deg p(x)
      & \text{if }c=e,\text{ and }p(x)=q(x).
\end{cases}
\]
\end{theorem}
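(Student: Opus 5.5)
The plan is to use the exact sequence obtained just before the statement, with the first argument's cycle $c$ playing the role of $e$ there. By \Cref{thm:Nam}, $\Ext^1(V^p_{[c^\infty]},V^q_{[e^\infty]})$ is the cokernel of the $K$-linear map
\[p(c)\star-\colon s(c)V^q_{[e^\infty]}\to s(c)V^q_{[e^\infty]},\]
where $V^q_{[e^\infty]}$ is regarded as a $K'_q$-space with $K'_q=K[x]/\langle q(x)\rangle$, and $\ell\star$ acts through $\sigma_{e,\x_q}$. So the whole task is to describe $s(c)V^q_{[e^\infty]}$ explicitly as a $K[x]$-module, with $x$ acting through $c\star-$, and then compute a cokernel dimension.

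The key combinatorial input is exclusivity. Since $c$ is exclusive, every closed path based at $s(c)$ is a power of $c$. Hence every path starting at $s(c)$ factors uniquely as $c^k\mu$ with $k\geq 0$ and $c\nmid\mu$. It follows that every infinite path from $s(c)$ tail equivalent to $e^\infty$ is uniquely of the form $c^k\lambda$ with $\lambda\in L_{(c,e)}$, where $L_{(c,e)}$ is the set in the statement (starting at $s(c)$, tail equivalent to $e^\infty$, not left divisible by $c$). The two cases behave differently.

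\emph{Case $c\neq e$.} Since the cycles are distinct and exclusive, they are disjoint, so $c$ does not contain $e_1$ and $\sigma_{e,\x_q}(c)=c$. Therefore $c\star(k'c^k\lambda)=k'c^{k+1}\lambda$, and $s(c)$ acts as the identity. This gives a decomposition
\[s(c)V^q_{[e^\infty]}=\bigoplus_{\lambda\in L_{(c,e)}}\ \bigoplus_{k\geq0}K'_q\,c^k\lambda\;\cong\;\bigoplus_{\lambda\in L_{(c,e)}}K'_q[t],\]
in which $c\star-$ corresponds to multiplication by $t$, and hence $p(c)\star-$ corresponds to multiplication by $p(t)$ on each summand. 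The cokernel of multiplication by $p(t)$ on $K'_q[t]$ is $K'_q[t]/(p(t))$. Its $K$-dimension is $\deg p\cdot[K'_q:K]=\deg p\cdot\deg q$. Summing over $\lambda$ gives $|L_{(c,e)}|\cdot\deg p\cdot\deg q$, interpreted as a cardinal product if $L_{(c,e)}$ is infinite.

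\emph{Case $c=e$.} An infinite path from $s(c)$ tail equivalent to $c^\infty$ cannot leave $c$ and later return, again by exclusivity. So the only such path is $c^\infty$, and $s(c)V^q_{[c^\infty]}=K'_q\,c^\infty$. On this space $c\star-$ is multiplication by $\x_q$, so $p(c)\star-$ is multiplication by $p(\x_q)\in K'_q$. If $p(\x_q)\neq 0$, this element is invertible, the map is surjective, and $\Ext^1=0$. If $p(\x_q)=0$, the map is zero and the cokernel has $K$-dimension $\deg q=\deg p$. Finally, $p(\x_q)=0$ if and only if $q\mid p$. Since both polynomials are irreducible and normalized by $p(0)=q(0)=-1$, this happens if and only if $p=q$.

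The main obstacle I expect is making the path decomposition rigorous: proving that exclusivity forces every path from $s(c)$ to be uniquely $c^k\mu$ with $c\nmid\mu$, and correctly tracking where the twist $\sigma_{e,\x_q}$ introduces factors of $\x_q$. For the first point I would argue with the first return of a path to $s(c)$: the segment up to that return is a closed path based at $s(c)$, hence equal to $c$. For the second, the twist only matters when $e_1$ lies on $c$, which by disjointness happens only in the case $c=e$.
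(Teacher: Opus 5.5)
Your proposal is correct and is essentially the paper's proof: it takes the same cokernel of $p(c)\star-$ on $s(c)V^q_{[e^\infty]}$, your $K'_q[t]$-module description justifies the paper's unexplained basis $\x_q^jc^i\lambda e^\infty$ with $0\le i<\deg p$, and invertibility of $p(\x_q)$ in the field $K'_q$ does the job of the paper's B\'ezout identity. One small point: a first-return closed path at $s(c)$ is not automatically a cycle, so your sketch must also use that no other cycle passes through any vertex of $c$, and this fact is needed only when $c=e$, since for $c\neq e$ every infinite path from $s(c)$ in $[e^\infty]$ is $c^k\lambda$ simply by taking $k$ maximal.
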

\begin{proof}
\begin{description}
\item[$c\not=e$] A $K$-basis for $s(c)V_{[e^\infty]}^{q}$ is given by the infinite paths
\[\x_q^j c^i\lambda e^\infty \quad 0\leq j<\deg q(x), i\geq 0, \lambda\in L_{c,e}.\]
Considering the quotient with respect to $p(c)\star V_{[e^\infty]}^{q}=p(c)V_{[e^\infty]}^{q}$ we get that 
\[\{\x_q^jc^i\lambda e^\infty+p(c)V_{[e^\infty]}^{q}\mid 0\leq j<\deg q(x), 0\leq i<\deg p(x), \lambda\in L_{c,e}\}\]
is a $K$-basis for $s(e)V_{[c^\infty]}^{q}/p(c)\star V_{[e^\infty]}^{q}\cong \Ext(V^p_{[c^\infty]}, V^q_{[e^\infty]})$. 
\item[$c=e$ and $p(x)\not=q(x)$] Since both $p(x)$ and $q(x)$ are irreducible, they are relatively prime in $K[x]$ and hence there exist $\alpha(x),\beta(x)\in K[x]$ such that $p(x)\alpha(x)+\beta(x)q(x)=1$. A $K$-basis for $s(c)V_{[c^\infty]}^{q}$ is given by the infinite paths
\[\x_q^j c^\infty \quad 0\leq j<\deg q(x).\]
Since
\[\x_q^j c^\infty=\x_q^j s(c) c^\infty=\x_q^j (p(c)\alpha(c)+\beta(c)q(c))c^\infty=p(c)\x_q^j c^\infty\in p(c)\star V_{[c^\infty]}^{q}\]
we get that $\Ext(V^p_{[c^\infty]}, V^q_{[c^\infty]})=0$.
\item[$c=e\text{ and }p(x)=q(x)$] A $K$-basis for $s(c)V_{[c^\infty]}^{p}$ is given by the infinite paths
\[\x_p^j c^\infty \quad 0\leq j<\deg p(x).\]
Since $p(c)\star V_{[c^\infty]}^{p}=0$ we conclude that the $K$-dimension of the space $\Ext(V^p_{[c^\infty]}, V^p_{[c^\infty]})$ is $\deg p$.
\end{description}
\end{proof}

If at least one of  the cycles $c$ and $e$ is  not exclusive,  then $L_{(e, c)}$ is an infinite set. In this case we show that $\dim_K \Ext^1(V^p_{[c^\infty]}, V^q_{[e^\infty]})=\infty$. The crucial point is the following

\begin{lemma}\label{lemma:non excl}
In the quotient $s(e)V_{[c^\infty]}^{q}/p(e)\star V_{[c^\infty]}^{q}$, the cosets $\lambda+p(e)\star V_{[c^\infty]}^{q}$, $\lambda\in L_{(e,c)}$, are $K$-linearly independent.
\end{lemma}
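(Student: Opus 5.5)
We have to show that a finite combination $\sum_{\lambda\in F}k_\lambda\lambda$ ($F\subseteq L_{(e,c)}$ finite, $k_\lambda\in K$) lying in $p(e)\star V^q_{[c^\infty]}$ forces all $k_\lambda=0$. The plan is to argue by looking at ``$e$-prefixes'': every element of $V^q_{[c^\infty]}$ is a finite $K'_q$-linear combination of infinite paths $\mu$ tail equivalent to $c^\infty$, and each such $\mu$ with $s(\mu)=s(e)$ can be written uniquely as $\mu=e^i\lambda$ with $i\geq 0$ and $\lambda\in L_{(e,c)}$, since $\lambda$ is not left divisible by $e$. Thus
\[
s(e)V^q_{[c^\infty]}=\bigoplus_{i\geq 0}\ \bigoplus_{\lambda\in L_{(e,c)}} K'_q\, e^i\lambda,
\]
and I will grade $s(e)V^q_{[c^\infty]}$ by the pair $(i,\lambda)$.

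Next I compute the action of $p(e)$ on a basis vector $\x_q^j e^i\lambda$. Twisting by $\sigma_{c,\x_q}$ only affects the edge $c_1$; so $e\star \x_q^j e^i\lambda = \x_q^{j+\epsilon}e^{i+1}\lambda$, where $\epsilon\in\{0,1\}$ is the number of occurrences of $c_1$ in $e$ (if $e$ passes through $c_1$ then, $e$ being a cycle, it occurs once). Hence $e\star-$ acts as the shift $e^i\lambda\mapsto e^{i+1}\lambda$ composed with multiplication by the scalar $\x_q^{\epsilon}\in K'_q$, and it preserves the $\lambda$-component. Writing $p(x)=p_nx^n+\dots+p_1x-1$, we get for any $m=\sum_{i,\lambda} a_{i,\lambda}e^i\lambda$ ($a_{i,\lambda}\in K'_q$, finitely many nonzero)
\[
p(e)\star m=\sum_{\lambda}\sum_i\Bigl(-a_{i,\lambda}+\sum_{t=1}^{n}p_t\,\x_q^{t\epsilon}a_{i-t,\lambda}\Bigr)e^i\lambda .
\]
In particular $p(e)\star-$ preserves each $\lambda$-summand $W_\lambda:=\bigoplus_i K'_q e^i\lambda$, and on $W_\lambda\cong K'_q[y]$ (via $e^i\lambda\mapsto y^i$) it is multiplication by the polynomial $g(y)=\sum_t p_t\x_q^{t\epsilon}y^t-1$, whose constant term is $-1$ and whose degree is $n\geq 1$ (as $p_n\neq0$, $\x_q\ne0$).

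Now suppose $\sum_{\lambda\in F}k_\lambda\lambda=p(e)\star m$. Projecting onto $W_\lambda$ gives $k_\lambda\cdot 1=g(y)\,h_\lambda(y)$ in $K'_q[y]$ for some polynomial $h_\lambda$. Since $\deg g\geq1$ and $K'_q[y]$ is a domain, a nonzero constant cannot be a multiple of $g$, so $k_\lambda=0$ for every $\lambda$, proving the independence. The main obstacle is the bookkeeping in the second step: one must check carefully that the unique factorization $\mu=e^i\lambda$ is valid also when $e$ and $c$ share vertices (non-exclusive case), and that applying $e$ never creates a path outside the form $e^{i+1}\lambda$, i.e.\ that $e\star-$ really respects the decomposition; I would verify this directly from the definition of the action on $V_{[c^\infty]}$ and the fact that $e\lambda$ is a genuine path when $s(\lambda)=s(e)=r(e)$. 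Note no invertibility of $e^*$ is needed, since only left multiplication by $e$ enters $p(e)$.
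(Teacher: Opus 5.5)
Your argument is sound except for one step that is false as stated: the decomposition $s(e)V^q_{[c^\infty]}=\bigoplus_{i\geq 0}\bigoplus_{\lambda\in L_{(e,c)}}K'_q\,e^i\lambda$. A basis path $\mu$ with $s(\mu)=s(e)$ factors as $e^i\lambda$ with $\lambda$ not left divisible by $e$ only if the largest power of $e$ dividing $\mu$ is finite. This fails for $\mu=e^\infty$ whenever $e^\infty\in[c^\infty]$, in particular for $c=e$, a case the lemma covers and the paper treats separately. (Shared vertices in the non-exclusive case, which is what you flagged, cause no trouble.) The repair is immediate. In that case $s(e)V^q_{[c^\infty]}=K'_q\,e^\infty\oplus\bigoplus_{\lambda}W_\lambda$, and the extra summand is stable under $e\star-$, because $e\star(a\,e^\infty)=\overline{x}_q\,a\,e^\infty$ (here $c_1$ lies on $e$). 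Hence it is also stable under $p(e)\star-$, which acts on it as multiplication by $p(\overline{x}_q)$. So projection onto $W_\lambda$ along the other summands still commutes with $p(e)\star-$. Your step $k_\lambda=g(y)h_\lambda(y)\Rightarrow k_\lambda=0$ then goes through verbatim, and you never need to know whether $p(\overline{x}_q)=0$.

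With this fix the proof is correct, and it is organized differently from the paper's. The paper writes $\sum_i k_i\lambda_i=p(e)\star\sum_j h_j\mu_j$ and orders the $\mu_j$ by the maximal power $e^{t_j}$ dividing them. It then observes that the paths divisible by $e^{n+t_m}$ carry the coefficients $h_jp_n$ (or $h_jp_n\overline{x}_q^{\,n}$), which must therefore vanish. This is exactly the leading-coefficient argument for your product $g(y)h_\lambda(y)$, but run globally and split according to whether $c_1$ lies on $e$. For $c=e$, where some $t_j$ may be infinite, the paper switches to multiplying by $ff^*$ for a cycle $f\neq c$ dividing $\lambda_1$. Its claim there that the paths $e^n\mu_j$, $j\in J_f$, occur nowhere else actually needs the same maximal-power bookkeeping, since some $\mu_{j'}=e^k\mu_j$ with $0<k\leq n$ would contribute the same path. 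Your decomposition into $p(e)$-stable pieces $W_\lambda\cong K'_q[y]$, on which $p(e)\star-$ is multiplication by $g(y)$, treats all cases uniformly (the exponent $\epsilon$ records whether $c_1$ lies on $e$) and does that bookkeeping automatically. It also gives more than independence: the quotient is $\bigoplus_\lambda K'_q[y]/(g)$, plus one copy of $K'_q$ when $e^\infty\in[c^\infty]$ and $p=q$. So each $\lambda\in L_{(e,c)}$ contributes exactly $\deg p\cdot\deg q$ to the $K$-dimension.
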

\begin{proof}
Let $\lambda_1,\cdots,\lambda_n\in L_{(e,c)}$ and $\sum_{i=1}^n k_i\lambda_i\in p(e)\star V_{[c^\infty]}^{q}$ with $k_i\not=0$ for $i=1,...,n$. Since distinct infinite paths are $K$-linearly independent,  $\sum_{i=1}^n k_i\lambda_i\not=0$. Then
\[\sum_{i=1}^n k_i\lambda_i=p(e)\star \sum_{i=1}^m h_j\mu_j=\sum_{j=1}^m h_j(p(e))^{\sigma_{c,\x_q}}\mu_j\]
with $s(e)=s(\mu_j)$, and $h_j\not=0$ for $j=1,...,m$. Without loss of generality we can assume that $t_j:=\max\{t\geq 0: e^t|\mu_j\}\in\mathbb N\cup\{\infty\}$ is an increasing function of $j$, and that, setting $t_0=-1$,  $t_{\bar j}<t_{\bar j +1}=\cdots=t_m$ for a suitable \R{$0\leq \bar j\leq m-1$}. If $e\not=c$, then all $t_j$'s are natural numbers. \\
If the edge $c_1$ does not appear in the cycle $e$, then 
\begin{align*}
\sum_{j=1}^m h_j(p(e))^{\sigma_{c,\x_q}}\mu_j&=\sum_{j=1}^m h_jp(e)\mu_j=\sum_{j=1}^m h_j\left(\sum_{\ell=1}^n p_\ell e^\ell -s(e)\right)\mu_j\\
&=h_mp_ne^n\mu_m+\cdots+h_{t_{\bar j +1}}p_ne^n\mu_{t_{\bar j +1}}+\alpha\\
&=e^{n+t_m}\left (h_mp_n \mu'_m+\cdots+h_{t_{\bar j +1}}p_n\mu'_{t_{\bar j +1}}\right)+\alpha
\end{align*}
where $\mu'_m$, ...,$\mu'_{t_{\bar j +1}}\in L_{c,e}$ and  $\alpha$ is a linear combination of infinite paths each one not left divisible by $e^{n+t_m}$.
Since distinct infinite paths in $[c^\infty]$ are $K$-linearly independent we get that $0=h_m p_n(=\cdots=h_{t_{\bar j +1}})p_n$ and hence $h_m=0$ producing a contradiction.\\
Assume now that $c_1$ is an edge of $e$ and $c\not=e$. 
\begin{align*}
\sum_{j=1}^m h_j(p(e))^{\sigma_{c,\x_q}}\mu_j&=
\sum_{j=1}^m h_j\left(\sum_{\ell=1}^n p_\ell \x_q^\ell e^\ell -s(e)\right)\mu_j\\
&=h_mp_n\x_q^ne^n\mu_m+\cdots+h_{t_{\bar j +1}}p_n\x_q^n e^n\mu_{t_{\bar j +1}}+\alpha\\
&=e^{n+t_m}\left (h_m\x_q^n p_n \mu'_m+\cdots+h_{t_{\bar j +1}}\x_q^n p_n\mu'_{t_{\bar j +1}}\right)+\alpha
\end{align*}
where $\mu'_m$, ...,$\mu'_{t_{\bar j +1}}\in L_{c,e}$ and  $\alpha$ is a $K':=K[x]/\langle q(x)\rangle$-linear combination of infinite paths each one not left divisible by $e^{n+t_m}$.
Since distinct infinite paths in $[c^\infty]$ are $K'$-linearly independent we get that $0=h_mp_n\x_q^n(=\cdots=h_{t_{\bar j +1}}p_n\x_q^n)$; since $\x_q$ is invertible in $K'$ we get $h_m=0$ producing a contradiction.\\
Assume that $c=e$. Then $L_{c,e}=L_{c,c}$ consists of those infinite paths which start at
$s(c)$, and which eventually equal $c^\infty$, but do not start out by traversing $c$. It is non empty if and only if there exists at least a cycle $f\not=c$ with $s(f)=s(c)$. Let $f\not=c$ be a cycle such that $f|\lambda_1$. Multiplying on both sides
\[\sum_{i=1}^n k_i\lambda_i=\sum_{j=1}^m h_j(p(e))^{\sigma_{c,\x_q}}\mu_j\]
by $ff^*$ we get
\[k_1\lambda_1+k_2 ff^*\lambda_2+\cdots+k_n ff^*\lambda_n=\sum_{j=1}^m h_j ff^*\mu_j.\]
If the term on the right is equal to zero, on the left, forgetting the terms equal to 0, we have a $K$-linear combination of distinct infinite paths equal to 0: then their coefficients (in particular $k_1$) are 0 getting a contradiction. Then the set $J_f:=\{j:f|\mu_j\}$ is not empty. The distinct paths $e^n ff^*\mu_j$, $j\in J_f$, are different from any other appearing: therefore since distinct paths are $K'$-linearly independent we get
\[\sum_{j\in J_f} h_j\x_q^ne^n\mu_j=0.\]
Since $\x_q$ is invertible in $K'$, we deduce that $h_j=0$ for $j\in J_f$ contradicting the fact that $J_f\not=\emptyset$.
\end{proof}

\begin{theorem}
If the cycles $c$ and $e$ are not both exclusive, then 
\[\dim_K \Ext^1(V^p_{[c^\infty]}, V^q_{[e^\infty]})=\infty.\]
\end{theorem}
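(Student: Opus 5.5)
The plan is to combine the presentation of $\Ext^1$ as a cokernel, obtained at the start of this section, with Lemma~\ref{lemma:non excl}. Applying $\Hom_{L_K(E)}(-,V^{q}_{[e^\infty]})$ to the short exact sequence coming from Theorem~\ref{thm:Nam} (with the roles of $e,c$ and $p,q$ as in the statement) gives an exact sequence of $K$-vector spaces
\[s(c)V^{q}_{[e^\infty]}\xrightarrow{p(c)\star-} s(c)V^{q}_{[e^\infty]}\to \Ext^1(V^p_{[c^\infty]},V^q_{[e^\infty]})\to 0 .\]
Hence
\[\Ext^1(V^p_{[c^\infty]},V^q_{[e^\infty]})\cong s(c)V^{q}_{[e^\infty]}/p(c)\star V^{q}_{[e^\infty]}.\]
It therefore suffices to exhibit infinitely many $K$-linearly independent cosets in this quotient.

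\textbf{Step 1.} By Lemma~\ref{lemma:non excl}, applied with the two cycles in the appropriate order, the cosets of the elements of $L_{(c,e)}$ are $K$-linearly independent in the quotient. So $\dim_K\Ext^1\ge |L_{(c,e)}|$, and the theorem reduces to showing that $L_{(c,e)}$ is infinite whenever $c$ and $e$ are not both exclusive. Here I would keep the standing assumption $s(c)\Path(E)s(e)\neq\emptyset$. Without it the $\Ext$ vanishes, so the statement implicitly requires it, and I would state this explicitly.

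\textbf{Step 2 (the combinatorial point).} Suppose some cycle, say $c$ (the case of $e$ is symmetric), is not exclusive. Then some vertex $u$ of $c$ is the base of a cycle $f$ different from the rotation $c_u$ of $c$ based at $u$.

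Fix a path $\gamma$ from $s(c)$ to $s(e)$, chosen of minimal length. Then $\gamma$ does not start with all of $c$ unless forced, in which case I would shorten it. Next write $c=\alpha\beta$ with $\alpha$ running from $s(c)$ to $u$. The infinite paths
\[\alpha f^k\beta\cdots\]
for $k\geq 1$, continued so that they reach $s(e)$ via $\gamma$ and then end in $e^\infty$, start at $s(c)$, are tail equivalent to $e^\infty$, and are pairwise distinct.

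One must check that they are not left divisible by $c$. After the prefix $\alpha$, the path enters $f$ instead of continuing along $c$. Since $f\neq c_u$ and both are cycles based at $u$, they must diverge at some edge. So $\alpha f$ does not have $c$ as a prefix, and $c$ does not have $\alpha f$ as a prefix either. Therefore $c\nmid \alpha f^k\cdots$.

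If instead $e$ is the non-exclusive cycle, I would insert the loop $f$ at a vertex of $e$ along the tail, taking paths of the form $\gamma e^{j}\alpha' f^k\beta' e^\infty$. These are distinct for different $k$, and the initial segment $\gamma$ (not divisible by $c$) guarantees membership in $L_{(c,e)}$. When $\gamma$ is trivial, i.e. $c=e$, this reduces to the observation already made in the proof of Lemma~\ref{lemma:non excl}: $L_{c,c}$ is nonempty as soon as another cycle $f$ is based at $s(c)$, and the powers $f^k$ give infinitely many elements.

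\textbf{Main obstacle.} The hardest part will be the bookkeeping in Step 2. One has to guarantee non-divisibility by $c$ uniformly, especially when $\gamma$ itself passes through $c$ or when $f$ shares an initial segment with $c$. I would handle this by choosing $\gamma$ of minimal length and placing the inserted detour $f$ after the first point where $\gamma$ leaves $c$.

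Once infinitely many elements of $L_{(c,e)}$ are produced, Step 1 yields infinitely many independent classes, and hence $\dim_K\Ext^1=\infty$.
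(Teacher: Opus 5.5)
Your strategy is the paper's: identify $\Ext^1(V^p_{[c^\infty]},V^q_{[e^\infty]})$ with $s(c)V^q_{[e^\infty]}/p(c)\star V^q_{[e^\infty]}$, invoke \Cref{lemma:non excl}, and show that $L_{(c,e)}$ is infinite by pumping a second cycle. The paper takes $f^ide^\infty$, resp.\ $df^ie^\infty$, with $f$ based at $s(c)$, resp.\ $s(e)$. You are also right that the standing hypothesis $s(c)\Path(E)s(e)\neq\emptyset$ is needed.

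The gap is in your non-divisibility argument in Step 2. From $f\neq c_u=\beta\alpha$ you only get that $f$ and $c_u$ diverge \emph{somewhere}. If $f$ begins with $\beta$ and diverges only afterwards, then $\alpha f$ begins with $\alpha\beta=c$. Concretely, let $c=c_1c_2$ with $s(c_1)=v=r(c_2)$ and $r(c_1)=u=s(c_2)$, and let $f=c_2h_1h_2$ be a cycle at $u$ through $v$ and a new vertex $w$. With $\alpha=c_1$ and $\beta=c_2$, every path $\alpha f^k\beta\gamma e^\infty$ begins with $c$, so none of them lies in $L_{(c,e)}$.

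The repair is to anchor the detour where $f$ \emph{leaves} $c$. Some vertex $u$ common to $f$ and $c$ has its $f$-edge different from its $c$-edge; otherwise $f$ would follow $c$ from a common vertex and be a rotation of $c$. Rotate $f$ to be based at that $u$. Then the first edge of $f$ differs from the first edge of the nontrivial path $\beta$. This gives both $c\nmid\alpha f^k\beta\gamma e^\infty$ and pairwise distinctness, since equality for $k<k'$ would force $\beta\gamma e^\infty=f^{k'-k}\beta\gamma e^\infty$. No condition on $\gamma$ is needed.

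Your proposed remedy, placing the detour after the point where $\gamma$ leaves $c$, targets the wrong path: what matters is where $f$ branches off $c$. The corrected construction also covers $c=e$ and intersecting $c,e$. So you do not need ``another cycle based at $s(c)$'', which may not exist (a $2$-cycle with a loop at its non-base vertex). In this respect the anchored detour is more robust than the paper's choice of $f$ based at $s(c)$.

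Finally, consider the case where $c$ is exclusive and $e$ is not. Here ``$\gamma$ not divisible by $c$'' says nothing when $|\gamma|<|c|$. Argue instead that $c$ and $e$ are then vertex-disjoint, so a shortest $\gamma$ neither revisits $s(c)$ nor is a prefix of $c$. Hence $c\nmid\gamma\alpha' f^k\beta' e^\infty$.
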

\begin{proof}
Assume now that $c$ is not exclusive and $e\not=c$. Denoted by $f$ a cycle with $s(f)=s(c)$, and by $d$ a path with $s(d)=s(c)$ and $r(d)=r(e)$, we have that $f^id\in L_{c,e}$ for each $i\in\mathbb N$.  If $e$ is not exclusive, denoted by $f$ a cycle with $s(f)=s(e)$, and by $d$ a path with $s(d)=s(c)$ and $r(d)=r(e)$, we have that $df^i\in L_{c,e}$ for each $i\in\mathbb N$. In both the cases we conclude by \Cref{lemma:non excl}.
\end{proof}

\begin{example} Consider the graph
\[\xymatrix@-1pc{&&&s_1\ar@/^/[dr]|{d_1}&&&&t_1\ar@(r,u)|{g'}\ar@/^/[dr]|{g_1}\ar[rr]^h&&w\ar@(r,u)|a\\\
E=&\overline u\ar[r]_p&s_4\ar@/^/[ur]|{d_4}&&s_2\ar@/^/[dl]|{d_2}\ar[rr]^b&&t_3\ar@/^/[ur]|{g_3}&&t_2\ar@<-2pt> `d[l] `[ll]|{g_2} [ll]&&z\ar@(r,u)|\ell \\
&&&s_3\ar@/^/[ul]|{d_3}\ar[rr]_m&&v\ar@/_1pc/ [rrrrru]_n}\]
and the field $K=\mathbb Q$.  The polynomials $p(x)=\frac 12 x^2-1$ and $q(x)=x^3-3x-1$ are basic irreducible in $\mathbb Q[x]$. Consider the cycles $g=g_1g_2g_3$, $d=d_1d_2d_3d_4$, $a$, $\ell$.
The left $L_{\mathbb Q}(E)$-modules $V^p_{[g^\infty]}$, $V^q_{[g^\infty]}$, $V^q_{[d^\infty]}$, $V^p_{[d^\infty]}$, $V^q_{[\ell^\infty]}$, $V^q_{[a^\infty]}$ are simple. 
Then
\begin{enumerate}
\item Since $s(g)\text{Path}(E)s(d)=\emptyset$:
\[\dim_{\mathbb Q}\Ext^1(V^p_{[g^\infty]}, V^q_{[d^\infty]})=0.\]
\item Since $L_{d,\ell}=\{d_1d_2mn\ell^\infty\}$ has cardinality 1, $\deg p(x)=2$, and $\deg q(x)=3$:
\[\dim_{\mathbb Q}\Ext^1(V^p_{[d^\infty]}, V^q_{[\ell^\infty]})=1\times 2\times 3=6.
\]
\item Since $d^\infty$ is exclusive and $p(x)\not=q(x)$:
\[\dim_{\mathbb Q}\Ext^1(V^p_{[d^\infty]}, V^q_{[d^\infty]})=0.
\]
\item Since $d^\infty$ is exclusive and $\deg p(x)=3$:
\[\dim_{\mathbb Q}\Ext^1(V^p_{[d^\infty]}, V^p_{[d^\infty]})=3.
\]
\item Since $L_{d,a}=\{d_1bg_3(g')^{i_1}g^{j_1}\cdots (g')^{i_r}g^{j_r}h a^\infty: 0\leq i_c, j_c, 1\leq c\leq r, r\geq 0\}$ is infinite:
\[\dim_{\mathbb Q}\Ext^1(V^p_{[d^\infty]}, V^q_{[a^\infty]})=\infty.\]
\item Since $g$ is not exclusive:
\[\dim_{\mathbb Q}\Ext^1(V^p_{[d^\infty]}, V^q_{[g^\infty]})=\infty=\dim_{\mathbb Q}\Ext^1(V^p_{[g^\infty]}, V^q_{[a^\infty]}).\]
\end{enumerate}
%\begin{align*}
%\dim_{\mathbb Q}\Ext^1(V^p_{[g^\infty]}, V^q_{[d^\infty]})&=0\\
%\dim_{\mathbb Q}\Ext^1(V^p_{[d^\infty]}, V^q_{[\ell^\infty]})&=1\times 2\times 3=6\\
%\dim_{\mathbb Q}\Ext^1(V^p_{[d^\infty]}, V^q_{[d^\infty]})&=0\\
%\dim_{\mathbb Q}\Ext^1(V^q_{[d^\infty]}, V^q_{[d^\infty]})&=3\\
%\dim_{\mathbb Q}\Ext^1(V^p_{[d^\infty]}, V^q_{[a^\infty]})&=\infty\\
%\dim_{\mathbb Q}\Ext^1(V^p_{[d^\infty]}, V^q_{[g^\infty]})&=\infty\\
%\dim_{\mathbb Q}\Ext^1(V^p_{[g^\infty]}, V^q_{[a^\infty]})&=\infty\\
%\end{align*}
\end{example}

\begin{remark}
The classification of the finitely presented simple modules given in \cite{AR14}, together with the description of the indecomposable injective modules given in \cite{AMT24}, and the results of this paper, 
give a complete description of the finite length modules over a Leavitt Path Algebras with disjoint cycles. we obtain in such a way a more detailed picture of the module category of this class of algebras.
\end{remark}

\end{document}